\newtheorem{thm}{Theorem}
\numberwithin{defn}{section}
\numberwithin{thm}{section}
\numberwithin{Lemma}{section}
\numberwithin{Corollary}{section}
\numberwithin{Example}{section}
\numberwithin{subsection}{section}
\numberwithin{Remark}{section}
\numberwithin{equation}{section}
\numberwithin{ppn}{section}
\begin{document}
\title
[Extended Local Convergence for Seventh order ... ]
{Extended Local Convergence for Seventh order method with $\psi$-continuity condition in Banach Spaces} 
\author{Akanksha Saxena, J. P. Jaiswal, K. R. Pardasani}
\date{}
\maketitle

\textbf{Abstract.} 
In this article, the local convergence analysis of the multi-step seventh order method is presented for solving nonlinear equations. The point worth noting in our paper is that our analysis requires a weak hypothesis where the Fr\'echet derivative of the nonlinear operator satisfies the $\psi$-continuity condition and extends the applicability of the computation when both Lipschitz and H\"{o}lder conditions fail. The convergence in this study is shown under the hypotheses on the first order derivative without involving derivatives of the higher-order.
To find a subset of the original convergence domain, a strategy is devised. As a result, the new Lipschitz constants are at least as tight as the old ones, allowing for a more precise convergence analysis in the local convergence case. Some numerical examples are provided to show the performance of the method presented in this contribution over some existing schemes.
\\\\
\textbf{Mathematics Subject Classification (MSC2020).} 
65H10, 65J15, 65G99, 47J25.
\\\\
\textbf{Keywords and Phrases.} 
Banach space,  local convergence, recurrence relation, $\psi$-continuity condition.


\section{\bf Introduction}

Numerical analysis is a wide-ranging subject having close connections with mathematics, computer science, engineering and the applied sciences. One of the most elemental and primal problem in numerical analysis, when mathematically modeled lead to integral equations, boundary value problems and differential problems, concerns with converting to nonlinear equations having the form of
\begin{equation}\label{eqn:11}
T(x)=0,
\end{equation}
where $T$ is defined on a convex open subset $D$ of a Banach space $X$ with values in a Banach space $Y$. Many problems in different fields of computational science, engineering such as radiative transfer theory, optimization etc. can be brought in a form like $(\ref{eqn:11})$ using mathematical modeling. Analytical methods of solving such type of problems are very scarce or almost non existent. Therefore, many researchers only rely on iterative method and they have proposed a plethora of iterative methods. In various articles, many authors have studied the local convergence analysis using Taylor's series but don't obtain the radii of convergence ball for the solution which can be seen in the refs. (  \cite{Behl},\cite{Sharma}). This approach has been extended for the iterative method in Banach spaces for obtaining better theoretical results without following Taylor's series approach. In this way, there is no need to use the higher-order derivatives to show the convergence of the scheme. These types of techniques are discussed by many authors, for example, one can go through the refs. (\cite{Rall}-\cite{Traub}). The practice of numerical functional analysis for finding such solutions are widely and substantially connected to Newton-like methods which is defined as follows
\begin{eqnarray}
x_{n+1}=x_n-[T'(x_n)]^{-1}T(x_n), \ \ \ n\ge 0,
\end{eqnarray}
is frequently used by various researchers as it has quadratic convergence (can be seen in the reference \cite{Traub}) and only one evaluation of the Jacobian of $T$ is needed at each step. The other properties of Newton's method are established in the reference book \cite{Kantorovich}. Moreover, in some applications involving stiff systems, high-order methods are useful. Therefore, it is important to study high-order methods.

Obtaining the radius of the convergence ball, as well as devising a theory to extend the convergence region, are both important issues. The convergence domain is critical for the steady behaviour of an iterative method from a numerical stand point. The convergence analysis of iterative procedures, particularly  the local analysis is based on the information around a solution, to find estimates of the radii of the convergence balls. Plenty of studies have been conducted on the local and semilocal convergence analysis of Newton-like techniques. Many iterative methods of increasing order of convergence, such as third order (\cite{Argyros2}-\cite{Argyros1}), fourth order (\cite{Argyros3}) and fifth order (\cite{Cordero}-\cite{Martinez}) etc., have been developed in recent decades and have demonstrated their efficiency in numerical terms.

In particular, Sharma and Gupta \cite{Janak} constructed three-step method of order five, mentioned as follows:
{\small
\begin{eqnarray}\label{eqn:12}
y_n&=&x_n-\frac{1}{2}\Gamma_nT(x_n),\nonumber\\
z_n&=&x_n-[T'(y_n)]^{-1}T(x_n),\nonumber\\
x_{n+1}&=&z_n-[2[T'(y_n)]^{-1}-\Gamma_n]T(z_n),
\end{eqnarray}}
 where, $\Gamma_n=[T'(x_n)]^{-1}$.
The local convergence of the above multi-step Homeier's-like method has been studied by Panday and Jaiswal \cite{Bhavna} with the help of Lipschitz and H$\ddot{o}$lder continuity conditions. 
{\small
\begin{eqnarray}\label{eqn:13}
y_n&=&x_n-\frac{1}{2}\Gamma_nT(x_n),\nonumber\\
z_n^{(1)}&=&x_n-[T'(y_n)]^{-1}T(x_n),\nonumber\\
z_{n}^{(2)}&=&z_n^{(1)}-[2[T'(y_n)]^{-1}-\Gamma_n]T(z_n^{(1)}),\nonumber\\
x_{n+1}&=&z_n^{(2)}-[2[T'(y_n)]^{-1}-\Gamma_n]T(z_n^{(2)}).
\end{eqnarray}}
This method requires the evaluation of three function, two first order derivatives and two matrix inversions per iteration. In this article, we have weaken the continuity condition and then analyze its local convergence. The motivation for writing this paper is the extension of the applicability of method $(\ref{eqn:13})$ by using the novelty that includes the extension of the convergence domain, that can be illustrated by creating a subset $D$ that includes the iterates. Nevertheless, because the Lipschitz-like parameters (or functions) in this set $D$ are at least as tight as the originals, the convergence is finer.

Moreover, numerous problems are there for which Lipschitz as well as H\"{o}lder condition both fail. As an motivational illustration, consider the nonlinear integral equation of the mixed Hammerstein-type given by \cite{Martinez}.
\begin{eqnarray*}
F[x(s)]=x(s)-5\int_{0}^1stx(t)^3dt,
\end{eqnarray*}
with $x(s)$ in $C[0,1]$. The first derivative of $F$ is
\begin{eqnarray*}
F'[x(s)]v(s)=v(s)-15\int_{0}^1stx(t)^2dt.
\end{eqnarray*}
It is clear that both Lipschitz and H\"{o}lder condition do not hold for this problem. Thereby, we expand the applicability of method $(\ref{eqn:13})$ by using hypothesis only on the first-order derivative of the function $T$ and generalized Lipschitz continuity conditions. In this manuscript, we address many concerns by providing the radius of the convergence ball, computable error bounds and the uniqueness of the solution of the result on using the weaker continuity condition.

The outline of this paper is as follows: Section $2$ deals with the local convergence results for the method $(\ref{eqn:13})$, obtaining a ball of convergence followed by its uniqueness. The numerical examples appear before the concluding Section. At last, we discuss the obtained results and discussions.

\section{\bf Local convergence analysis}
The local convergence analysis of the method $(\ref{eqn:13})$ is centered on some parameters and scalar functions. Let $\psi_0,\ \psi$ be a  non-decreasing continuous functions defined on the interval $[0,+\infty)$ with values in $[0,+\infty)$ satisfying $\psi_0(0)=0$. Define parameters $\rho_0$ by
\begin{equation}\label{eqn:31}
\rho_0=sup\{t\ge 0:\psi_0(t)<1\}.
\end{equation}
Let also $\psi :[0,r_0)\rightarrow [0,+\infty)$ be continuous and non-decreasing functions so that $\psi(0)=0$. Define the functions $\eta_1,\eta_2,\eta_3,\eta_4,p,H_1,H_2,H_3$ and $H_4$ on interval $[0,\rho_0)$ by
{\tiny
\begin{eqnarray}\label{eqn:32}
\eta_4(a)&=&{\bigg[}\frac{\int_0^1\psi((1-t)\eta_3(a)a)dt\eta_3(a)}{(1-\psi_0(\eta_3(a)a))}\nonumber\\
&&+\frac{[\psi(\eta_1(a).a)+\psi(\eta_3(a)a)]}{1-\psi_0(\eta_3(a)a)}\times
\left(\frac{1}{1-p(a)}\bigg[\int_0^1{\bigg(}\psi_0(t\eta_3(a)a){+1\bigg)}dt\bigg]\eta_3(a)\right)\nonumber\\
&&+\left(\frac{1}{1-\psi_0(a)}.\frac{L[\psi(\eta_1(a).a)+\psi(a)]}{(1-p(a)}\times \bigg[1+\frac{L_0}{2}\eta_3(a)a\bigg]\right){\bigg]}\eta_3(a),
\end{eqnarray}}
where
{\small
\begin{eqnarray}\label{eqn:33}
\eta_1(a)=\frac{1}{1-\psi_0(a)}\bigg[\frac{1}{2}\int_0^1{\bigg(}\psi_0(ta){+1\bigg)}dt+\int_0^1\psi((1-t)a)dt\bigg],
\end{eqnarray}}
{\small
\begin{eqnarray}\label{eqn:34}
\eta_2(a)=\frac{1}{1-\psi_0(a)}\bigg[\int_0^1\psi((1-t)a)dt+\frac{[\psi(\eta_1(a).a)+\psi(a)][\int_0^1{\bigg(}\psi_0(ta){+1\bigg)}dt]}{1-p(a)}\bigg],\nonumber\\
\end{eqnarray}}
{\tiny
\begin{eqnarray}\label{eqn:32a}
\eta_3(a)&=&{\bigg[}\frac{\int_0^1\psi((1-t)\eta_2(a)a)dt\eta_2(a)}{(1-\psi_0(\eta_2(a)a))}\nonumber\\
&&+\frac{[\psi(\eta_1(a).a)+\psi(\eta_2(a)a)]}{1-\psi_0(\eta_2(a)a)}\times
\left(\frac{1}{1-p(a)}\bigg[\int_0^1{\bigg(}\psi_0(t\eta_2(a)a){+1\bigg)}dt\bigg]\eta_2(a)\right)\nonumber\\
&&+\left(\frac{1}{1-\psi_0(a)}.\frac{L[\psi(\eta_1(a).a)+\psi(a)]}{(1-p(a)}\times \bigg[1+\frac{L_0}{2}\eta_2(a)a\bigg]\right){\bigg]}\eta_2(a),
\end{eqnarray}}
and
\begin{eqnarray}\label{eqn:35}
p(a)=\psi_0(\eta_1(a).a).
\end{eqnarray}
Let
{\small
\begin{eqnarray}\label{eqn:36}
H_1(a)=\eta_1(a)-1,\ \ \ \ H_2(a)=\eta_2(a)-1,\\
H_3(a)=\eta_3(a)-1,\ \ \ \ H_4(a)=\eta_4(a)-1.
\end{eqnarray}}
We have that $H_1(0)=H_2(0)=H_3(0)=H_4(0)<0$.\\
Suppose $H_1(a)\rightarrow+\infty$ or a positive constant and \ $H_2(a)\rightarrow+\infty$ or a positive constant as $t\rightarrow \rho_0^-$.
Similarly, $H_3(a)\rightarrow+\infty$ or a positive constant and $H_4(a)\rightarrow+\infty$ or a positive constant as $t\rightarrow \overline{\rho}_0^-$,\\
where
\begin{equation}\label{eqn:31a}
\overline{\rho}_0=max\{a\in[0,\rho_0]:\psi_0(\eta_1(a).a)<1\}.
\end{equation}
It then follows from the intermediate value theorem that functions $H_i,\ i=1,2,3,4$ have zeros in the interval $(0,\rho_0)$. Define the radius of convergence $\rho$ by
\begin{eqnarray}\label{eqn:37}
\rho=min\{\rho_i\}, \ i=1,2,3,4;
\end{eqnarray}
where $\rho_i's$ denote the smallest solution of functions $H_i's$.
Then, we have that for each $a\in [0, \rho)$
\begin{eqnarray}\label{eqn:38}
0\leq \eta_i(a)<1,\\
0\leq \psi_0(a)<1,\\
0\leq \psi_0(\eta_1(a).a)<1.
\end{eqnarray}
Let $B(x^*,\rho),\ \overline{B(x^*,\rho)}$ stand, respectively for the open and closed ball in $X$ such that $x^*\in X$ and of radius $\rho>0$. Next, we present the local convergence analysis of method $(\ref{eqn:13})$ using the preceding notations and generalized Lipschitz-H$\ddot{o}$lder type conditions.

\begin{thm}\label{thm:31}
Let $T:D\subset X\rightarrow Y$ be a continuously first order Fr\'echet differentiable operator. Suppose that $x^*\in D$ and function $\psi_0:[0,+\infty)\rightarrow [0,+\infty)$ with $\psi_0(0)=0$, continuous and non-decreasing such that for each $x\in D$
\begin{eqnarray}\label{eqn:311}
T(x^*)=0,\ [T'(x^*)]^{-1}\in L(Y,X),
\end {eqnarray}
where, $L(X,Y)$ is the set of bounded linear operators from $X$ to $Y$,
\begin{eqnarray}\label{eqn:312}
\|[T'(x^*)]^{-1}(T'(x)-T'(x^*))\|\le \psi_0(\|x-x^*\|).
\end{eqnarray}
Moreover, suppose that there exists function $\psi:[0,+\infty)\rightarrow [0,+\infty)$ with $\psi(0)=0$, continuous and non-decreasing such that for each $x,y \ in \ D_0=D\cap B(x^*,\rho_0)$
\begin{eqnarray}\label{eqn:313}
\|[T'(x^*)]^{-1}(T'(x)-T'(y))\|&\le& \psi(\|x-y\|),\\
B(x^*,\rho)&\subseteq& D,
\end{eqnarray}
where $\rho_0$ and $\rho$ defined by equations $(\ref{eqn:31})$ and $(\ref{eqn:37})$, respectively. Then, the sequence $\{x_n\}$ generated by method $(\ref{eqn:13})$ for $x_0\in B(x^*,\rho)\backslash\{x^*\}$ is well defined in $B(x^*,\rho)$ remains in $B(x^*,\rho)$ for each $n=0,1,2,\cdots$ and converges to $x^*$. Moreover, the following estimates holds:
{\small
\begin{eqnarray}\label{eqn:316}
\|y_n-x^*\|\le \eta_1(\|x_n-x^*\|)\|x_n-x^*\|\le\|x_n-x^*\|<\rho,\\
\|z_n^{(1)}-x^*\|\le \eta_2(\|x_n-x^*\|)\|x_n-x^*\|\le\|x_n-x^*\|<\rho,\\
\|z_n^{(2)}-x^*\|\le \eta_3(\|x_n-x^*\|)\|x_n-x^*\|\le\|x_n-x^*\|<\rho,
\end{eqnarray}}
and
{\small
\begin{eqnarray}\label{eqn:318}
\|x_{n+1}-x^*\|\le \eta_4(\|x_n-x^*\|)\|x_n-x^*\|\le\|x_n-x^*\|<\rho,
\end{eqnarray}}
where the functions $\eta_i, i=1,2,3,4$ are defined by the expressions $(\ref{eqn:32})$ - $(\ref{eqn:32a})$. Furthermore, if there exists $\varrho\ge \rho$ such that 
\begin{equation}\label{eqn:319}
\int_0^1\psi_0(\theta \varrho)d\theta <1,
\end{equation}
then the point $x^*$ is the only solution of equation $T(x)=0$ in $D_1=D\cap \overline {B(x^*,\varrho)}$.
\end{thm}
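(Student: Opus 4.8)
The plan is to run the standard local-convergence machinery of Argyros type: first secure invertibility of every Jacobian that appears through the Banach perturbation lemma, then express each substep of $(\ref{eqn:13})$ through the integral form $T(u)-T(x^*)=\int_0^1 T'(x^*+t(u-x^*))(u-x^*)\,dt$ and estimate it with the $\psi_0$- and $\psi$-conditions $(\ref{eqn:312})$ and $(\ref{eqn:313})$. Fix $x_n\in B(x^*,\rho)\setminus\{x^*\}$ and write $a=\|x_n-x^*\|<\rho\le\rho_0$. Since $\psi_0(a)<1$, condition $(\ref{eqn:312})$ and the Banach lemma give that $\Gamma_n=[T'(x_n)]^{-1}$ exists with $\|\Gamma_nT'(x^*)\|\le (1-\psi_0(a))^{-1}$; the same argument applied at $y_n$ (once $\|y_n-x^*\|\le\eta_1(a)a$ is known, so that $\psi_0(\eta_1(a)a)=p(a)<1$ by $(\ref{eqn:38})$) yields $\|[T'(y_n)]^{-1}T'(x^*)\|\le (1-p(a))^{-1}$. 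For $y_n$ I would split $y_n-x^*=[(x_n-x^*)-\Gamma_nT(x_n)]+\tfrac12\Gamma_nT(x_n)$: the first bracket is a full Newton step, equal to $\Gamma_n\int_0^1[T'(x_n)-T'(x^*+t(x_n-x^*))](x_n-x^*)\,dt$ and bounded via $(\ref{eqn:313})$ by $(1-\psi_0(a))^{-1}\int_0^1\psi((1-t)a)\,dt\,a$, while the second is bounded using $\|[T'(x^*)]^{-1}T(x_n)\|\le\int_0^1(\psi_0(ta)+1)\,dt\,a$. Adding these reproduces exactly $\eta_1(a)a$ as in $(\ref{eqn:33})$.

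For $z_n^{(1)}$ the key device is to add and subtract the Newton step built from $\Gamma_n$: $z_n^{(1)}-x^*=[(x_n-x^*)-\Gamma_nT(x_n)]+(\Gamma_n-[T'(y_n)]^{-1})T(x_n)$. The first bracket is again the Newton residual bounded by $(1-\psi_0(a))^{-1}\int_0^1\psi((1-t)a)\,dt\,a$, and the correction is handled with the operator identity $\Gamma_n-[T'(y_n)]^{-1}=\Gamma_n(T'(y_n)-T'(x_n))[T'(y_n)]^{-1}$, estimating $\|[T'(x^*)]^{-1}(T'(y_n)-T'(x_n))\|\le\psi(\eta_1(a)a)+\psi(a)$ by the triangle inequality through $x^*$. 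Collecting the four factors gives precisely the second summand of $\eta_2$ in $(\ref{eqn:34})$, so $\|z_n^{(1)}-x^*\|\le\eta_2(a)a$.

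The main work — and the principal obstacle — is the two outer steps $z_n^{(2)}$ and $x_{n+1}$, which use the composite inverse $2[T'(y_n)]^{-1}-\Gamma_n$. Here I would write, for instance,
\[
z_n^{(2)}-x^*=\big[(z_n^{(1)}-x^*)-[T'(z_n^{(1)})]^{-1}T(z_n^{(1)})\big]+\big([T'(z_n^{(1)})]^{-1}-2[T'(y_n)]^{-1}+\Gamma_n\big)T(z_n^{(1)}),
\]
bounding the first bracket as a genuine Newton step from $z_n^{(1)}$ (this produces the term $(1-\psi_0(\eta_2(a)a))^{-1}\int_0^1\psi((1-t)\eta_2(a)a)\,dt\,\eta_2(a)$), and splitting the residual operator as $([T'(z_n^{(1)})]^{-1}-[T'(y_n)]^{-1})+(\Gamma_n-[T'(y_n)]^{-1})$ so that the identity $A^{-1}-B^{-1}=A^{-1}(B-A)B^{-1}$ can be applied to each piece. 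The delicate point is bookkeeping: each of the four norm factors in every piece must be assigned the correct ball radius ($a$, $\eta_1(a)a$, or $\eta_2(a)a$) and the matching Banach bound ($(1-\psi_0(a))^{-1}$, $(1-p(a))^{-1}$, or $(1-\psi_0(\eta_2(a)a))^{-1}$), after which the three resulting summands assemble into $\eta_3(a)a$ as in $(\ref{eqn:32a})$; I note that the factors $L$ and $1+\tfrac{L_0}{2}\eta_2(a)a$ displayed there are the specialization of $\int_0^1(\psi_0(t\eta_2(a)a)+1)\,dt$ to the Lipschitz choice $\psi_0(s)=L_0s$, and I would keep the general $\psi_0$-form throughout. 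The estimate for $x_{n+1}$ is literally the same computation with $z_n^{(1)}$ replaced by $z_n^{(2)}$ and $\eta_2$ by $\eta_3$, yielding $\eta_4(a)a$ as in $(\ref{eqn:32})$.

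Finally, since $(\ref{eqn:38})$ forces $0\le\eta_i(a)<1$ on $[0,\rho)$, each displayed estimate keeps the corresponding iterate inside $B(x^*,\rho)$, and $\|x_{n+1}-x^*\|\le\eta_4(a)\,\|x_n-x^*\|$ with $\eta_4$ non-decreasing gives $\|x_{n+1}-x^*\|\le c\,\|x_n-x^*\|$, $c=\eta_4(\|x_0-x^*\|)<1$; hence $\|x_n-x^*\|\le c^{\,n}\|x_0-x^*\|\to0$ and $x_n\to x^*$. For uniqueness in $D_1=D\cap\overline{B(x^*,\varrho)}$, I would take any $y^*\in D_1$ with $T(y^*)=0$, set $M=\int_0^1T'(x^*+\theta(y^*-x^*))\,d\theta$, and use $(\ref{eqn:312})$ to get $\|[T'(x^*)]^{-1}(M-T'(x^*))\|\le\int_0^1\psi_0(\theta\varrho)\,d\theta<1$ by $(\ref{eqn:319})$; the Banach lemma then makes $M$ invertible, and $M(y^*-x^*)=T(y^*)-T(x^*)=0$ forces $y^*=x^*$.
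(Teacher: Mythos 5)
Your proposal is correct and follows essentially the same route as the paper: the same Banach-lemma invertibility bounds, the same splitting of the half-Newton step for $y_n$, the same operator identity $A^{-1}-B^{-1}=A^{-1}(B-A)B^{-1}$ for $z_n^{(1)}$, the same three-term decomposition of $2[T'(y_n)]^{-1}-\Gamma_n$ for the outer steps, and the same uniqueness argument via the averaged operator $\int_0^1 T'(x^*+\theta(y^*-x^*))\,d\theta$. Your observation that the constants $L$ and $L_0$ appearing in the paper's definitions of $\eta_3$ and $\eta_4$ are an unexplained specialization (the paper's own derivation actually produces the general $\psi_0$-integral form you propose to keep) is accurate and, if anything, an improvement in consistency.
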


\begin{proof}
We shall show by mathematical induction that sequence $\{x_n\}$ is well defined and converges to $x^*$. Using the hypotheses, $x_0\in B(x^*,\rho)\backslash\{x^*\}$, equation $(\ref{eqn:31})$ and inequality $(\ref{eqn:312})$, we have that
\begin{eqnarray}
\|[T'(x^*)]^{-1}(T'(x_0)-T'(x^*))\|\le\psi_0(\|x_0-x^*\|)\le\psi_0(\rho)<1.
\end{eqnarray}
It follows from the above and the Banach lemma on invertible operator \cite{Argyros} that $[T'(x_0)]^{-1}\in L(Y,X)$
 or $T'(x_0)$ is invertible and
\begin{eqnarray}
\|[T'(x_0)]^{-1}T'(x^*)\|\le\frac{1}{1-\psi_0(\|x_0-x^*\|)}.
\end{eqnarray}
Now, $y_0$ is well defined by the first sub-step of the scheme $(\ref{eqn:13})$ and for $n=0,$
{\small
\begin{eqnarray}\label{eqn:321}
y_0-x^*&=&x_0-x^*-\frac{1}{2}[[T'(x_0)]^{-1}T(x_0)]\nonumber\\
& =&\frac{1}{2}[[T'(x_0)]^{-1}T(x_0)]+[T'(x_0)]^{-1}[T'(x_0)(x_0-x^*)-T(x_0)+T(x^*)]||.\nonumber\\
\end{eqnarray}}
Expanding $T(x_0)$ along $x^*$ and taking the norm of the equation $(\ref{eqn:321})$, we get
{\small
\begin{eqnarray}\label{eqn:322}
\|y_0-x^*\|&\le& \bigg\|\frac{1}{2}[T'(x_0)]^{-1}T'(x^*)\bigg\|\bigg[\int_0^1\|[T'(x^*]^{-1}[T'(x*+t(x_0-x^*)]dt\|\|x_0-x^*\|\bigg]\nonumber\\
&&+\|[T'(x_0)]^{-1}T'(x^*)\|\int_0^1\|[T'(x^*)]^{-1}[T'(x_0)-T'(x^*+t(x_0-x^*))]dt\|\|x_0-x^*\|,\nonumber\\
&\le& \frac{1}{1-\psi_0(\|x_0-x^*\|)}\bigg[\frac{1}{2}\int_0^1{\bigg[}\psi_0(t\|x_0-x^*\|)+1{\bigg]} dt\nonumber\\
&+&\int_0^1\psi((1-t)\|x_0-x^*\|)dt\bigg]\|x_0-x^*\|\nonumber\\
&\le&\eta_1(\|x_0-x^*\|)\|x_0-x^*\|<\rho.
\end{eqnarray}}
From the inequalities $(\ref{eqn:312})$ and $(\ref{eqn:322})$, we have
{\small
\begin{eqnarray}
\|[T'(x^*)]^{-1}[T'(y_0)-T'(x^*)]\|&\le&\psi_0(\|y_0-x^*\|)\nonumber\\
&\le&\psi_0(g_1(\|x_0-x^*\|)\|x_0-x^*\|)\nonumber\\
&=&p(\|x_0-x^*\|)<1.
\end{eqnarray}}
Thus, by Banach lemma,
\begin{eqnarray}
\|[T'(y_0)]^{-1}T'(x^*)\|\le\frac{1}{1-p(\|x_0-x^*\|)}.
\end{eqnarray}
From the second sub-step of the method $(\ref{eqn:13})$, we have
\begin{eqnarray}\label{eqn:326}
z_0^{(1)}-x^*&=&x_0-x^*-[T'(y_0)^{-1}T(x_0)]\nonumber\\
&=&x_0-x^*-[T'(x_0)]^{-1}T(x_0)+[T'(x_0)]^{-1}[T'(y_0)-T'(x_0)]T'(y_0)^{-1}T(x_0).\nonumber\\
\end{eqnarray}
On taking norm of the equation $(\ref{eqn:326})$, we get
{\small
\begin{eqnarray}
\|z_0^{(1)}-x^*\|&\le&\|x_0-x^*-[T'(x_0)]^{-1}T(x_0)\|+\|[T'(x_0)]^{-1}T'(x^*)\|.\nonumber\\
&&\|[T'(x^*)]^{-1}[T'(y_0)-T'(x_0)]\|\|[T'(y_0)]^{-1}T'(x^*)\|\|[T'(x^*)]^{-1}T(x_0)\|\nonumber\\
&\le&\frac{1}{1-\psi_0(\|x_0-x^*\|)}\bigg[\int_0^1\psi((1-t)\|x_0-x^*\|)dt\nonumber\\
&+&\frac{[\psi(\|y_0-x^*\|)+\psi(\|x_0-x^*\|)][\int_0^1{\bigg(}\psi_0(t\|x_0-x^*\|){+1\bigg)}dt]}{1-p(\|x_0-x^*\|)}\bigg]\|x_0-x^*\|.\nonumber\\
\end{eqnarray}}
Thus, we get
\begin{eqnarray}
\|z_0^{(1)}-x^*\|\le \eta_2(\|x_0-x^*\|)\|x_0-x^*\|\le\|x_0-x^*\|<\rho.
\end{eqnarray}
From the next sub-step of the method $(\ref{eqn:13})$, we have
{\small
\begin{eqnarray}\label{eqn:327}
&&z_0^{(2)}-x^*=z_0^{(1)}-x^*-(2[T'(y_0)]^{-1}-[T'(x_0)]^{-1})T(z_0^{(1)})\nonumber\\
&&=(z_0^{(1)}-x^*-[T'(z_0^{(1)})]^{-1}T'(z_0^{(1)}))+[T'(z_0^{(1)})]^{(-1)}T'(x^*)T'(x^*)^{-1}[T'(y_0)-T'(z_0^{(1)})]\nonumber\\
&&.[T'(y_0)]^{(-1)}T'(x^*)[T'(x^*)]^{-1}T(z_0^{(1)})+[T'(x_0)]^{(-1)}T'(x^*)[T'(x^*)]^{-1}[T'(y_0)-T'(x_0)]\nonumber\\
&&.[T'(y_0)]^{(-1)}T'(x^*)[T'(x^*)]^{-1}T(z_0^{(1)}).
\end{eqnarray}}
On expanding $T(z_0^{(1)})$ along $x^*$ and taking norm of the equation $(\ref{eqn:327})$, we get
{\tiny
\begin{eqnarray}
\|z_0^{(2)}-x^*\|&\le&\frac{1}{1-w_0(\|z_0^{(1)}-x^*\|)}\int_0^1\psi((1-t)\|z_0^{(1)}-x^*\|)dt.\|z_0^{(1)}-x^*\|+\frac{[w(\|y_0-x^*\|)+w(\|z_0^{(1)}-x^*\|])}{1-w_0(\|z_0^{(1)}-x^*\|)}\nonumber\\
&\times&\|[T'(x^*)]^{-1}T(z_0^{(1)})\|+\frac{1}{1-w_0(\|x_0-x^*\|)}[w(\|y_0-x^*\|)+w(\|x_0-x^*\|)]\nonumber\\
&\times&\|[T'(x^*)]^{-1}T(z_0^{(1)})\|\nonumber\\
&\le&\frac{1}{1-w_0(\|z_0^{(1)}-x^*\|)}\int_0^1\psi((1-t)\|z_0^{(1)}-x^*\|)dt.\|z_0^{(1)}-x^*\|+\frac{[w(\|y_0-x^*\|)+w(\|z_0^{(1)}-x^*\|])}{1-w_0(\|z_0^{(1)}-x^*\|)}\nonumber\\
&\times&\frac{1}{1-p(\|x_0-x^*\|)}\bigg(\int_0^1{\bigg(}\psi_0(t\|z_0^{(1)}-x^*\|){+1\bigg)}dt\bigg)\|z_0^{(1)}-x^*\|+\frac{[w(\|y_0-x^*\|)+w(\|x_0-x^*\|)]}{1-w_0(\|x_0-x^*\|)}\nonumber\\
&\times&\frac{1}{1-p(\|x_0-x^*\|)}\bigg(\int_0^1{\bigg(}\psi_0(t\|z_0^{(1)}-x^*\|){+1\bigg)}dt\bigg)\|z_0^{(1)}-x^*\|.
\end{eqnarray}}
Thus, we have
\begin{eqnarray}
\|z_0^{(2)}-x^*\|\le \eta_3(\|x_0-x^*\|)\|x_0-x^*\|<\rho.
\end{eqnarray}
Now, from the last sub-step of the method $(\ref{eqn:13})$, we have
{\small
\begin{eqnarray}\label{eqn:329}
&&x_1-x^*=z_0^{(2)}-x^*-(2[T'(y_0)]^{-1}-[T'(x_0)]^{-1})T(z_0^{(2)})\nonumber\\
&&=(z_0^{(2)}-x^*-[T'(z_0^{(2)})]^{-1}T'(z_0^{(2)}))+[T'(z_0^{(2)})]^{(-1)}T'(x^*)T'(x^*)^{-1}[T'(y_0)-T'(z_0^{(1)})]\nonumber\\
&&.[T'(y_0)]^{(-1)}T'(x^*)[T'(x^*)]^{-1}T(z_0^{(2)})+[T'(x_0)]^{(-1)}T'(x^*)[T'(x^*)]^{-1}[T'(y_0)-T'(x_0)]\nonumber\\
&&.[T'(y_0)]^{(-1)}T'(x^*)[T'(x^*)]^{-1}T(z_0^{(2)}).
\end{eqnarray}}
On expanding $T(z_0^{(2)})$ along $x^*$ and taking norm of the equation $(\ref{eqn:329})$, we get
{\tiny
\begin{eqnarray}
\|x_1-x^*\|&\le&\frac{1}{1-w_0(\|z_0^{(2)}-x^*\|)}\int_0^1\psi((1-t)\|z_0^{(2)}-x^*\|)dt.\|z_0^{(2)}-x^*\|+\frac{[w(\|y_0-x^*\|)+w(\|z_0^{(2)}-x^*\|])}{1-w_0(\|z_0^{(2)}-x^*\|)}\nonumber\\
&\times&\|[T'(x^*)]^{-1}T(z_0^{(2)})\|+\frac{1}{1-w_0(\|x_0-x^*\|)}[w(\|y_0-x^*\|)+w(\|x_0-x^*\|)]\nonumber\\
&\times&\|[T'(x^*)]^{-1}T(z_0^{(2)})\|\nonumber\\
&\le&\frac{1}{1-w_0(\|z_0^{(2)}-x^*\|)}\int_0^1\psi((1-t)\|z_0^{(2)}-x^*\|)dt.\|z_0^{(2)}-x^*\|+\frac{[w(\|y_0-x^*\|)+w(\|z_0^{(2)}-x^*\|])}{1-w_0(\|z_0^{(2)}-x^*\|)}\nonumber\\
&\times&\frac{1}{1-p(\|x_0-x^*\|)}\bigg(\int_0^1{\bigg(}\psi_0(t\|z_0^{(2)}-x^*\|){+1\bigg)}dt\bigg)\|z_0^{(2)}-x^*\|+\frac{[w(\|y_0-x^*\|)+w(\|x_0-x^*\|)]}{1-w_0(\|x_0-x^*\|)}\nonumber\\
&\times&\frac{1}{1-p(\|x_0-x^*\|)}\bigg(\int_0^1{\bigg(}\psi_0(t\|z_0^{(2)}-x^*\|){+1\bigg)}dt\bigg)\|z_0^{(2)}-x^*\|.
\end{eqnarray}}
Thus, we have
\begin{eqnarray}
\|x_1-x^*\|\le \eta_4(\|x_0-x^*\|)\|x_0-x^*\|<\rho,
\end{eqnarray}
which shows that for $n=0,\ x_1\in B(x^*,\rho).$ By simply replacing $x_0, y_0, z_0^{(1)},z_0^{(2)}, x_1$ by $x_n, y_n, z_n^{(1)},z_n^{(2)}, x_{n+1}$ in the preceding estimates, we arrive at inequalities $(\ref{eqn:316})-(\ref{eqn:318})$. By the estimate, 
\begin{eqnarray}
\|x_{n+1}-x^*\|\le \eta_4(\|x_0-x^*\|)\|x_n-x^*\|<\rho.
\end{eqnarray}
We conclude that $\lim_{n\rightarrow\infty}x_n=x^*$ and $x_{n+1}\in B(x^*,r)$. Finally, to prove the uniqueness, let $y^*\in B(x^*,r)$ where $y^*\neq x^*$ with $T(y^*)=0$.
Define $F=\int_0^1T'(x^*+t(y^*-x^*))dt$. On expanding $T(y^*)$ along $x^*$ and using inequalities $(\ref{eqn:312})$ and $(\ref{eqn:319})$, we obtain
\begin{eqnarray}
\|[T'(x^*)]^{-1}\int_0^1[T'(x^*+t(y*-x^*)-T'(x^*)]dt\|\nonumber\\
\le\int_0^1\psi_0(t\|y^*-x^*\|)dt\le\int_0^1\psi_0(t\varrho)dt<1.
\end{eqnarray}
So, by Banach lemma, $\int_0^1[T'(x^*)]^{-1}[T'(x^*+t(y^*-x^*))]dt$  exists and invertible leading to the conclusion $x^*=y^*$, which completes the uniqueness part of the proof.
\end{proof}




\section{\bf Numerical example}
\textbf {Example 3.1}\cite{Behl1} Returning back to the motivational example given in the introduction of this study.
Let us consider now a nonlinear integral equation of Hammerstein type. These equations have strong physical background and arise in electro-magnetic fluid dynamics \cite{Poly}. This equation has the following form 
\begin{eqnarray*}
x(s)=u(s)+\int_{a}^bG(s,t)H(x(t))dt,\ \ a\le x\le b,
\end{eqnarray*}
for $x(s), u(s)\in C[a,b]$ with $-\infty<a<b<\infty$; $G$ is the Green function and $H$ is a polynomial function. The standard procedure to solve these type of equations contains in rewriting it as a nonlinear operator in a Banach space i.e. $F(x)=0,\ F:\psi \subseteq C[a,b]\rightarrow C[a,b]$ with a non-empty open convex subset and 
\begin{eqnarray*}
F[x(s)]=x(s)-u(s)-\int_{a}^bG(s,t)H(x(t))dt.
\end{eqnarray*}
considering the uniform norm $\|v\|=\max\limits_{s\in [a,b]}|v(s)|$. It was observed that in some cases boundedness conditions may not be satisfied since $F''(x)$ or $F'''(x)$ can be unbounded in a general domain. Thus, an alternative is looking a domain that contains the solution. But it is more convenient to apply the local convergence results obtained in our study in order to give the radius of convergence ball. 
Let $x^*=0$, then on using $(\ref{eqn:311})-(\ref{eqn:313})$ we have that, $w_0(t)=7.5t<w(t)=15t.$ It is straightforward to say  on the basis of the table $(\ref{tab:1})$ that the method $(\ref{eqn:13})$ has a larger domain of convergence in contrast to method (MMB) whose local convergence is analyzed by Behl et al. \cite{Behl1}. 
\begin{table}
\centering
\caption{Comparison of Convergence radius (Example 1)}\label{tab:1}
\begin{tabular}{|c| c| c| c| c| c| } \hline
$Radius$     &$\rho_1$  &$\rho_2$   &$\rho_3$  &$\rho_4$  &$\rho$\\ \hline
Method $(\ref{eqn:13})$        &0.0296296      &0.0205601    &0.0175449  & 0.0166341  & 0.0166341\\ \hline
MMB    &0.0666667  &0.0292298  &0.0118907 &0.00440901  &0.00440901  \\ 
\hline
\end{tabular}
\end{table}
\\

\textbf{Example 3.2}\cite{George}
Suppose that the motion of an object in three dimensions is governed by system of differential equations
\begin{eqnarray*}
f'_1(x)-f_1(x)-1=0,\\
f'_2(x)-(e-1)y-1=0,\\
f'_3(z)-1=0,
\end{eqnarray*}
with $x,y,z\in D=\overline{U(0,1)}$ for $f_1(0)=f_2(0)=f_3(0)=0.$ Then, the solution of the system is given for $v=(x,y,z)^T$ by function $F:=(f_1,f_2,f_3):D\rightarrow \mathbb{R}^3$ defined by
\begin{eqnarray*}
F(v)=(e^x-1, \frac{e-1}{2}y^2+y,z)^T.
\end{eqnarray*}
Then the first Fr\'echet derivative is given by
\begin{equation}
F'(v)=
\begin{pmatrix}
e^x && 0 && 0\\
0 && (e-1)y+1 && 0\\
0 && 0 && 1
\end{pmatrix}.
\end{equation}
Notice that $x^*=(0,0,0), \ F'(x^*)=F'(x^*)^{-1}=diag\{1,1,1\}$. Then, on using $(\ref{eqn:311})-(\ref{eqn:313})$ and on assuming $\psi_0(t)=\psi_0t,\ \psi(t)=\psi t$, we have that $\psi_0=e-1<\psi=e^{\frac{1}{\psi_0}}.$ The radius $\rho$ of convergence is computed in the Table $(\ref{tab:2})$.  Since the method $(\ref{eqn:13})$ has a larger radius of convergence as compared to the method (MMB),
 this means that method $(\ref{eqn:13})$ has a wider domain for the choice of the starting points. As a result, the approach being evaluated is more powerful.\\
\begin{table}
\centering
\caption{Comparison of Convergence radius (Example 2)}\label{tab:2}
\begin{tabular}{|c| c| c| c| c| c|} \hline
$Radius$   &$\rho_1$  &$\rho_2$  &$\rho_3$ &$\rho_4$  &$\rho$ \\ \hline
Method $(\ref{eqn:13})$    &0.164331  &0.135757  &0.119283   & 0.114151  & 0.114151 \\ \hline 
MMB    &0.382692  &0.198328  &0.0949498 &0.040525  &0.040525  \\ 
\hline
\end{tabular}
\end{table}
 \\           

\textbf{Example 3.3}\cite{Regmi} Let $X=Y=\psi=\mathbb{R}$. Define $F(x)= \sin x$. Using our assumptions, we get $F'(x)=\cos x$. Moreover, for $x^*=0$, on using $(\ref{eqn:311})-(\ref{eqn:313})$ and on assuming $\psi_0(t)=\psi_0t,\ \psi(t)=\psi t$,  it is derived that $\psi_0=\psi=1$.  It is clear to say on the basis of the Table $(\ref{tab:3})$ that the method $(\ref{eqn:13})$ has a larger radius of convergence as compared to the method (MMR) mentioned  in the reference \cite{Regmi}. So, we can conclude that  the presented method enlarges the radius of convergence ball.
\begin{table}
\centering
\caption{Comparison of Convergence radius (Example 3)}\label{tab:3}
\begin{tabular}{|c| c| c| c| c| c| } \hline
$Radius$ &$\rho_1$ & $\rho_2$ &$\rho_3$ &$\rho_4$ &$\rho$  \\ \hline
Method $(\ref{eqn:13})$         &0.285714   &0.238655    &0.210099    & 0.201186     & 0.201186\\ \hline
MMR    &0.44444  &0.277466  &0.15771 &  &0.15771  \\ 
\hline 
\end{tabular}
\end{table}
\\

\textbf{Example 3.4}\cite{Regmi} Consider the function $f$ defined on $D=[-\frac{1}{2},\frac{5}{2}]$ by
\begin{eqnarray*}\label{eqn:ex1}
f(x)=
\begin{cases}
      x^3log(x^2)+x^5-x^4, & \text{if \ $x\neq0$}\\
      0, & \text{if \ $x=0$}. 
\end{cases}
\end{eqnarray*}
The unique solution is $x^*=1$. The consecutive derivatives of $f$ are
\begin{eqnarray*}
f'(x)&=&3x^2logx^2+5x^4-4x^3+2x^2,\\
f''(x)&=&6xlogx^2+20x^3-12x^2+10x,\\
f'''(x)&=&6logx^2+60x^2-24x+22.
\end{eqnarray*}
It can be easily visible that $f'''$ is unbounded on $D$. Nevertheless, all the assumptions of the Theorem $(\ref{thm:31})$ for the iterative method $(\ref{eqn:13})$ are satisfied and hence applying the convergence results with $x^*=1$, we obtain $\psi_0(t)=\psi_0t,\ \psi(t)=\psi t$, it can be calculated that $\psi_0=\psi=96.6628$. Table $(\ref{tab:4})$ displays the radius $\rho$ of convergence by the discussed method $(\ref{eqn:13})$ along with the existing multi-step scheme (MMB). 
We discovered that when compared the provided method enlarges the radius of the convergence ball as compared to the existing one.
\begin{table}
\centering
\caption{Comparison of convergence radius (Example 4)}\label{tab:4}
\begin{tabular}{|c| c| c| c| c| c|} \hline
$Radius$  &$\rho_1$  &$\rho_2$  &$\rho_3$  &$\rho_4$  &$\rho$ \\ \hline
Method $(\ref{eqn:13})$        & 0.00295578    &  0.00246894    &0.00217353  &0.00208131   & 0.00208131  \\ \hline
MMB    &0.00689682  &0.00344841  &0.0015606 &0.000621105  &0.000621105  \\ 
\hline
\end{tabular}
\end{table}


\section{\bf Conclusions}
In the study of the convergence for the iterative methods, the major issues are the radius of convergence, the selection of the initial point and the uniqueness of the solution. Therefore, we have addressed these issues using efficient seventh order method by considering the sufficient convergence conditions that are weaker than Lipschitz and H$\ddot{o}$lder ones. That means, our analysis is applicable to solve such nonlinear problems when both Lipschitz and H$\ddot{o}$lder continuity conditions fail without applying higher-order derivatives. A convergence theorem for existence and uniqueness of the solution has been established followed by its error bounds. Consequently, we are extending the applicability of the method by solving some nonlinear equations employing our analytical results.

Akanksha Saxena\\
Department of Mathematics\\
Maulana Azad National Institute of Technology\\
 Bhopal, M.P. India-462003.\\
Email: akanksha.sai121@gmail.com.\\\\\\
J. P. Jaiswal\\
Department of Mathematics\\
Guru Ghasidas Vishwavidyalaya ( A Central University)\\
Bilaspur, C.G. India-495009.\\
Email: asstprofjpmanit@gmail.com.\\\\
K. R. Pardasani\\
Department of Mathematics\\
Maulana Azad National Institute of Technology\\
 Bhopal, M.P. India-462003.\\
Email: kamalrajp@rediffmail.com.\\\\
\end{document}